\newtheorem{theorem}{Theorem}
\newtheorem{lemma}[theorem]{Lemma}
\newtheorem{proposition}[theorem]{Proposition}
\newtheorem{remark}[theorem]{Remark}
\theoremstyle{definition}
\renewcommand{\ref}[1]{(\ref{#1})}
\newcommand{\p}[1]{
{#1}^{\prime}
}
\newcommand{\pmax} {
p_{{\mathrm{max}}}
}
\newcommand{\Var} {
\mathrm{Var}
}
\newcommand{\Un}{
U^{(n)}
}
\newcommand {\nmax} {
N_{\rm max}
}
\newcommand{\ns} {
n\sigma^2
}
\newcommand{\ps}{
p_{S_n}
}
\newcommand{\nsceil}{
\lceil \ns \rceil
}
\newcommand{\floor}[1]{
\lfloor #1 \rfloor
}
\begin{document}

\title{Approximate Discrete Entropy Monotonicity for Log-Concave Sums}
\author{Lampros Gavalakis
\thanks { Laboratoire d’Analyse et de Math\'ematiques Appliqu\'ees, Universit\'e Gustave Eiffel, France.  Email:\texttt{\href{mailto:lampros.gavalakis@univ-eiffel.fr}
			{lampros.gavalakis@univ-eiffel.fr}}.
L.G. has received funding from the European Union's Horizon 2020 research and innovation program
 under the Marie Sklodowska-Curie grant agreement No 101034255. {\large \euflag}
}
}

\maketitle
%% Note that for log-concave $\sigma^2 \to \infty$ is equivalent to $H(X) \to \infty$

\begin{abstract}
It is proven that a conjecture of Tao (2010) holds true for log-concave random variables on the integers: 
For every $n \geq 1$, if $X_1,\ldots,X_n$ are i.i.d. integer-valued, log-concave random variables, then
$$
H(X_1+\cdots+X_{n+1}) \geq  H(X_1+\cdots+X_{n}) + \frac{1}{2}\log{\Bigl(\frac{n+1}{n}\Bigr)} - o(1)
$$ 
as $H(X_1) \to \infty$, where $H$ denotes the (discrete) Shannon entropy.
The problem is reduced to the continuous setting by showing that if $U_1,\ldots,U_n$ are independent continuous uniforms on $(0,1)$, then 
$$
h(X_1+\cdots+X_n + U_1+\cdots+U_n) = H(X_1+\cdots+X_n) + o(1)
$$
as $H(X_1) \to \infty$, where $h$ stands for the differential entropy. Explicit bounds for the $o(1)$-terms are provided. 
\end{abstract}

\noindent
{\small
{\bf Keywords --- } 
entropy, monotonicity,
log-concavity, entropy power inequality, 
central limit theorem, concentration,
sumset inequalities
}

\medskip

\noindent
{\bf 2020 Mathematics subject classification --- }
94A17; 60E15; 39B62

\section{Introduction}

\subsection{Monotonic Increase of Differential Entropy}

Let $X, Y$ be two independent random variables with densities in $\mathbb{R}$. The differential entropy of $X$, having density $f$, is
$$
h(X) = -\int_{\mathbb{R}}{f(x)\log{f(x)}dx}
$$
and similarly for $Y$. Throughout $`\log$' denotes the natural logarithm. 

The Entropy Power Inequality (EPI) plays a central role in information theory. It goes back to Shannon \cite{shannon1948mathematical} and was first proven in full generality by Stam \cite{stam}.
It asserts that

\begin{equation} \label{EPI}
N(X+Y) \geq N(X) + N(Y),
\end{equation}
where $N(X)$ is the \textit{entropy power} of $X$:
$$
N(X) = \frac{1}{2 \pi e}e^{2h(X)}.
$$

\noindent
If $X_1,X_2$ are identically distributed, \eqref{EPI} can be rewritten as 
\begin{equation} \label{EPIlogarithmic}
h(X_1+X_2) \geq h(X_1) + \frac{1}{2}\log{2}.
\end{equation}

The EPI is also connected with and has applications in probability theory. The following generalisation is due to Artstein, Ball, Barthe and Naor \cite{artstein}: If $\{X_i\}_{i=1}^{n+1}$ are continuous, i.i.d. random variables then 
\begin{equation} \label{abbn}
h\Bigl(\frac{1}{\sqrt{n+1}}\sum_{i=1}^{n+1}{X_i}\Bigr) \geq h\Bigl(\frac{1}{\sqrt{n}}\sum_{i=1}^{n}{X_i}\Bigr).
\end{equation}
This is the monotonic increase of entropy along the central limit theorem \cite{barronentropy}. The main result of this paper may be seen as an approximate, discrete analogue 
of \eqref{abbn}.

\subsection{Sumset theory for Entropy}

There has been interest in formulating discrete analogues of the EPI from various perspectives \cite{adaptivesensing, abbe, harremoesepibinomial, woo2015discrete}. It is not hard to see that the exact statement \eqref{EPIlogarithmic} can not hold for all discrete random variables by considering deterministic (or even close to deterministic) random variables.

Suppose $G$ is an additive abelian group and $X$ is a random variable supported on a discrete (finite or countable) subset $A$ of $G$ with probability mass function (p.m.f.) $p$ on $G$. 
The Shannon entropy, or simply {entropy} of $X$ is  %%%%%%%%%%%%%%%%%%%%%%%%%%%%%%%!!!!!!!!!!!!!!!!!!!
\begin{equation} \label{entropynatsequation}
H(X) = -\sum_{x \in A}{p(x)\log{p(x)}}.
\end{equation}
Tao \cite{tao_sumset_entropy} proved that if $G$ is torsion-free and $X$ takes finitely many values then
\begin{equation} \label{taofirststatement}
H(X_1+X_2) \geq H(X_1) + \frac{1}{2}\log{2} - o(1),
\end{equation}
where $X_1, X_2$ are independent copies of $X$ and the $o(1)$-term vanishes as the entropy of $X$ tends to infinity. 
That work explores the connection between additive combinatorics and entropy, which was identified by Tao and Vu in the unpublished notes \cite{taovunotes} and by Ruzsa \cite{ruzsa2009sumsets}. 
The main idea is that random variables in $G$ may be associated with subsets $A$ of $G$: By the \textit{asymptotic equipartition property} \cite{cover}, there is a set $A_n$ (the typical set) such that if $X_1,\ldots,X_n$ are i.i.d. copies of $X$, then $(X_1,\ldots,X_n)$ is approximately uniformly distributed on $A_n$ and $|A_n| = e^{n(H(X) + o(1))}.$ Hence, given an inequality involving cardinalities of sumsets, it is natural to guess that a counterpart statement holds true for random variables if the logarithm of the cardinality is replaced by the entropy. 

Exploring this connection, Tao \cite{tao_sumset_entropy} proved an inverse theorem for entropy, which characterises random variables for which the addition of an independent copy does not increase the entropy by much. This is the entropic analogue of the inverse Freiman theorem \cite{tao_vu_book} from additive combinatorics, which characterises sets for which the sumset is not much bigger than the set itself. The discrete EPI \eqref{taofirststatement} is a consequence of the inverse theorem for entropy. 

Furthermore, it was conjectured in \cite{tao_sumset_entropy} that for any $n \geq 2$ and $\epsilon > 0$ 
\begin{equation} \label{taosoriginalconj}
 H(X_1+\cdots+X_{n+1}) \geq  H(X_1+\cdots+X_{n}) + \frac{1}{2}\log{\Bigl(\frac{n+1}{n}\Bigr)} - \epsilon,
\end{equation}
provided that $H(X)$ is large enough depending on $n$ and $\epsilon$, where $\{X_i\}_{i=1}^{n+1}$ are i.i.d. copies of $X$.
 
We will prove that the conjecture \eqref{taosoriginalconj} holds true for log-concave random variables on the integers. 
An important step in the proof of \eqref{taofirststatement} is reduction to the continuous setting by approximation of the continuous density with a discrete p.m.f.; we briefly outline these key points from that proof in Section \ref{taosection} below as we are going to take a similar approach.

A discrete entropic central limit theorem was recently established in \cite{gavalakis2021entropy}. A discussion relating the above conjecture to the convergence of Shannon entropy to its maximum in analogy with \eqref{abbn} may be found there. 

It has also been of interest to establish finite bounds for the $o(1)$-term \cite{adaptivesensing, woo2015discrete}. Our proofs yield explicit rates for the $o(1)$-terms, which are exponential in $H(X_1)$. 

The class of discrete log-concave distributions has been considered recently by Bobkov, Marsiglietti and Melbourne \cite{bobkov2022concentration} in connection with the EPI. In particular, discrete analogues of \eqref{EPI} were proved for this class.
In addition, sharp upper and lower bounds on the maximum probability of discrete log-concave random variables in terms of their variance were provided, which we are going to use in the proofs (see Lemma \ref{bobkovlemma} below). Although log-concavity is a strong assumption in that it implies, for example, connected support set and moments of all orders, many important distributions are log-concave, e.g. Bernoulli, Poisson, geometric, negative binomial and others. 

\subsection{Main results and proof ideas} \label{taosection}

The first step in the proof method of \cite[Theorem 1.9]{tao_sumset_entropy} is to assume that $H(X_1+X_2) \leq H(X_1) + \frac{1}{2}\log{2} - \epsilon.$ Then, because of \cite[Theorem 1.8]{tao_sumset_entropy}, proving the result for random variables $X$ that can be expressed as a sum $Z+U$, where $Z$ is a random variable with entropy $O(1)$ and $U$ is a uniform on a large arithmetic progression, say $P$, suffices to get a contradiction. Such random variables satisfy, for every $x$,
\begin{equation} \label{taoprobs}
\mathbb{P}(X = x) \leq \frac{C}{|P|}
\end{equation} 
for some absolute constant $C$. 
Using tools from the theory of sum sets, it is shown that it suffices to consider random variables that take values in a finite subset of the integers. For such random variables that satisfy \eqref{taoprobs}, the smoothness property
\begin{equation} \label{tvproperty}
\|p_{X_1+X_2} - p_{X_1+X_2+1}\|_{\rm TV} \to 0
\end{equation}
as $H(X) \to \infty$ is established, where $p_{X_1+X_2}, p_{X_1+X_2+1}$ are the p.m.f.s of $X_1+X_2$ and $X_1+X_2+1$ respectively and $\|\cdot\|_{\mathrm{TV}}$ is the total variation distance defined in \eqref{tvdef} below. 
Using this, it is shown that 
\begin{equation} \label{introentropyapprox}
h(X_1+X_2+U_1+U_2) = H(X_1+X_2) + o(1),
\end{equation}
as $H(X) \to \infty,$ where $U_1,U_2$ are independent continuous uniforms on $(0,1).$  The EPI for continuous random variables is then invoked.

The tools that we use are rather probabilistic-- our proofs lack any additive combinatorial arguments as we already work with random variables on the integers, which have connected support set. An important technical step in our case is to show that any log-concave random variable $X$ on the integers satisfies
$$
\|p_X- p_{X+1}\|_{\rm TV} \to 0
$$
as $H(X) \to \infty$. Using this we show a generalisation of \eqref{introentropyapprox}, our main technical tool: 

\begin{theorem} \label{maintheorem}
Let $n \geq 1$ and suppose $X_1,\ldots,X_n$ are i.i.d. log-concave random variables on the integers with common variance $\sigma^2.$ Let $U_1,\ldots,U_n$ be continuous i.i.d. uniforms on $(0,1).$ Then 
\begin{equation} 
h(X_1+\cdots+X_n+U_1+\cdots+U_n) = H(X_1+\cdots+X_n) + o(1),
\end{equation}
where the $o(1)$-term vanishes as $\sigma^2 \to \infty$ depending on $n$. In fact, this term can be bounded absolutely by 
\begin{equation} \label{nsigmarate}
2^{n+6} e^{-(\sqrt{n}\sigma)^{1/5}} (\sqrt{n}\sigma)^{3} + \frac{2^{n+2}}{\sigma\sqrt{n}}\log(2^{n+2}\sigma\sqrt{n}) +\frac{\log{(\ns)}}{8\ns},
\end{equation}
provided that $\sigma > \max\{2^{n+2}/\sqrt{n},3^7/\sqrt{n}\}$.
\end{theorem}
\begin{remark}
{\ Note} that always, $H(X) \to \infty$ implies $\sigma^2 \to \infty,$ since by the maximum entropy property of the Gaussian distribution~\cite{cover} 
\begin{equation}
H(X) = h(X+U) \leq \frac{1}{2}\log{\bigl(2\pi e\sigma^2(1+\frac{1}{12})\bigr)},
\end{equation}
where $U$ is an independent continuous uniform on $(0,1)$. Conversely, for the class of log-concave random variables $\sigma \to \infty$ implies $H(X) \to \infty$, e.g. by Proposition \ref{qlogconcave} and Lemma \ref{bobkovlemma}. Indeed these give a quantitative comparison between $H(X)$ and $\sigma^2=\mathrm{Var}(X)$ for log-concave random variables: $H(X) \geq \log{\sigma}$ for $\sigma \geq 1$ .% Similar results were obtained in \cite{bobkov2022concentration}.?
\end{remark}
Our main tools are first, to approximate the density of the log-concave sum convolved with the sum of $n$ continuous uniforms with the discrete p.m.f. (Lemma \ref{densityapproxlemma}) and second, to show a type of concentration for the ``information density'', $-\log{p(S_n)},$ using Lemma \ref{nzerolemma}. It is a standard argument to show that log-concave p.m.f.s have exponential tails, since the sum of the probabilities is convergent. Lemma \ref{nzerolemma} is a slight improvement in that it provides a bound for the ratio depending on the variance. 

By an application of the generalised EPI for continuous random variables, we show that the conjecture \eqref{taosoriginalconj} is true for log-concave random variables on the integers, with an explicit dependence between $H(X)$ and $\epsilon.$ Our main result is: 

\begin{theorem} \label{taosconjecturelog}
Let $n \geq 1$ and $\epsilon \in (0,1)$. Suppose $X_1,\ldots,X_n$ are i.i.d. log-concave random variables on the integers. Then if 
$H(X_1)$ is sufficiently large depending on $n$ and $\epsilon$, 
\begin{equation} \label{mainequation}
 H(X_1+\cdots+X_{n+1}) \geq  H(X_1+\cdots+X_{n}) + \frac{1}{2}\log{\Bigl(\frac{n+1}{n}\Bigr)} - \epsilon.
\end{equation}
In fact, for \eqref{mainequation} to hold it suffices to take $H(X_1) \geq \log{\frac{2}{\epsilon}} + \log{\log{\frac{2}{\epsilon}}} + n + 27.$  
\end{theorem}
The proofs of Theorems \ref{maintheorem} and \ref{taosconjecturelog} are given in Section \ref{proofsection}. Before that, in Section \ref{prelimsection} below, we prove some preliminary facts about discrete, log-concave random variables. 

For $n=1,$ the lower bound for $H(X_1)$ given by Theorem \ref{taosconjecturelog} for the case of log-concave random variables on the integers is a significant improvement on the lower bound that can be obtained from the 
proof given in \cite{tao_sumset_entropy} for discrete random variables in a torsion free group, which is $\Omega\Bigl({\frac{1}{\epsilon}}^{{\frac{1}{\epsilon}}^{\frac{1}{\epsilon}}}\Bigr)$.

Finally, let us note that Theorem \ref{maintheorem} is a strong result: Although we suspect that the assumption of log-concavity may be relaxed, we do not expect it to hold in much greater generality; we believe that some structural conditions on the random variables should be necessary.

\section{Notation and Preliminaries} \label{prelimsection}
For a random variable $X$ with p.m.f. $p$ on the integers denote 
 \begin{equation} \label{qdef}
q := \sum_{k \in \mathbb{Z}}\min\{p(k),p(k+1)\}.
\end{equation}
The parameter $q$ defined above plays an important role in a technique known as Bernoulli part decomposition, which has been used in \cite{daviselementary, mcdonald,mineka} to prove local limit theorems. It was also used in \cite{gavalakis2021entropy} to prove the discrete entropic CLT mentioned in the Introduction. 

In the present article, we use $1-q$ as a measure of smoothness of a p.m.f. on the integers. 
In what follows we will also write $q(p)$ to emphasise the dependence on the p.m.f. $p$.

For two p.m.f.s on the integers $p_1$ and $p_2$, we use the notation 
$$\|p_1 - p_2\|_{1} := \sum_{k \in \mathbb{Z}}{|p_1(k)-p_2(k)|}$$
 for the $\ell_1$-distance between $p_1$ and $p_2$ and 
\begin{equation} \label{tvdef}
\|p_1 - p_2\|_{\rm TV} := \frac{1}{2}\|p_1 - p_2\|_{1}
\end{equation}
 for the total variation distance.

\begin{proposition} \label{qTVexpr}
Suppose $X$ has p.m.f. $p_X$ on $\mathbb{Z}$ and let $q = \sum_{k \in \mathbb{Z}}{\min{\{p_X(k),p_X(k+1)\}}}$. Then 
$$\|p_X - p_{X+1}\|_{\rm TV} = 1-q.$$
\end{proposition}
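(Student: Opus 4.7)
The plan is to reduce the claim to the standard identity expressing total variation distance between two probability mass functions as one minus the mass of their pointwise minimum. Concretely, the first step is to observe the elementary shift relation $p_{X+1}(k) = p_X(k-1)$, so that
$$
\|p_X - p_{X+1}\|_{\rm TV} = \tfrac{1}{2}\sum_{k \in \mathbb{Z}} |p_X(k) - p_X(k-1)|.
$$

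Next I would invoke (or prove in one line) the identity $\|p_1-p_2\|_{\rm TV} = 1 - \sum_k \min\{p_1(k), p_2(k)\}$, valid for any two p.m.f.s on $\mathbb{Z}$. This identity follows immediately from $|a-b| = a+b-2\min(a,b)$, since summing over $k$ yields $\sum_k |p_1(k)-p_2(k)| = 2 - 2\sum_k \min\{p_1(k), p_2(k)\}$. Applying this with $p_1 = p_X$ and $p_2 = p_{X+1}$ gives
$$
\|p_X - p_{X+1}\|_{\rm TV} = 1 - \sum_{k \in \mathbb{Z}} \min\{p_X(k), p_X(k-1)\}.
$$

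Finally, a reindexing $k \mapsto k+1$ in the sum identifies $\sum_k \min\{p_X(k), p_X(k-1)\}$ with $q = \sum_k \min\{p_X(k), p_X(k+1)\}$, yielding the desired equality $\|p_X - p_{X+1}\|_{\rm TV} = 1-q$. There is no real obstacle here; the only thing to be careful about is the bookkeeping between the forward and backward shifts, which is handled by the reindexing step. The whole argument is a two-line manipulation once the TV–minimum identity is in hand.
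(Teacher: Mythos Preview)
Your proof is correct and follows essentially the same route as the paper's: both reduce to the pointwise identity $|a-b| = a + b - 2\min(a,b)$ (the paper phrases it via $\min + \max = a+b$ and $\max - \min = |a-b|$, which is the same thing), sum over $k$, and use that both pmfs total $1$. The only cosmetic difference is that the paper writes the $\ell_1$ sum directly as $\sum_k |p_X(k) - p_X(k+1)|$ and avoids a separate reindexing step, whereas you pass through $p_{X+1}(k) = p_X(k-1)$ first; either way the bookkeeping is trivial.
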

\begin{proof}
Since $|a-b| = a + b - 2\min{\{a,b\}}$,
$$
\|p_X-p_{X+1}\|_1 = \sum_{k\in \mathbb{Z}}{|p_X(k) - p_X(k+1)|} = 2 - 2\sum_{k \in \mathbb{Z}}{\min{\{p_X(k),p_X(k+1)\}}} = 2(1-q).
$$
The result follows. 
\end{proof}

A p.m.f. $p$ on $\mathbb{Z}$ is called {\em log-concave}, if for any $k \in \mathbb{Z}$ 
\begin{equation} \label{logconcavity}
p(k)^2 \geq p(k-1)p(k+1).
\end{equation}
If a random variable $X$ is distributed according to a log-concave p.m.f. we say that $X$ is log-concave. 
Throughout we suppose that $X_1,\ldots,X_n$ are i.i.d. random variables having a log-concave p.m.f. on the integers, $p$, common variance $\sigma^2$ and denote their sum with $S_n$.
Also, we denote
$$
\pmax = \pmax(X) := \sup_k{\mathbb{P}(X = k)}
$$
and write 
\begin{equation} \label{Nmaxdef}
\nmax = \nmax(X) := \max\{k \in \mathbb{Z}: p(k) = \pmax\},
\end{equation}
i.e. $\nmax$ is the last $k \in \mathbb{Z}$ for which the maximum probability is achieved. 
We will make use of the following bound from \cite{bobkov2022concentration}:
\begin{lemma} \label{bobkovlemma}
Suppose $X$ has discrete log-concave distribution with $\sigma^2 = \Var{(X)} \geq 1.$ Then 
\begin{equation} \label{bobkovbound}
\frac{1}{4\sigma} \leq \pmax \leq \frac{1}{\sigma}.
\end{equation}
\end{lemma}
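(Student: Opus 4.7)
Since this bound is attributed to~\cite{bobkov2022concentration}, in the paper it is sensible simply to cite it; nevertheless here is how I would approach proving it from scratch.

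For the lower bound $\pmax \geq 1/(4\sigma)$, the plan is to combine Chebyshev's inequality with a pigeonhole on the integers. For any $\alpha > 0$, $\mathbb{P}(|X - \mathbb{E}X| < \alpha \sigma) \geq 1 - 1/\alpha^2$, and the open interval $(\mathbb{E}X - \alpha\sigma, \mathbb{E}X + \alpha\sigma)$ contains at most $2\alpha\sigma + 1$ integers. Pigeonholing then gives $\pmax \geq (1 - \alpha^{-2})/(2\alpha\sigma + 1)$. Optimizing $\alpha$ and using $\sigma \geq 1$ gets one essentially to $1/(4\sigma)$; the remaining slack in the constant would be absorbed by invoking unimodality of log-concave p.m.f.s, which forces the mass inside the Chebyshev interval to be concentrated near the mode rather than spread uniformly over the integers in it.

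For the upper bound $\pmax \leq 1/\sigma$ the plan is to exploit the geometric-like decay forced by log-concavity. Writing $r_k = p(\nmax + k + 1)/p(\nmax + k)$ for $k \geq 0$, log-concavity makes $k \mapsto r_k$ non-increasing and maximality of $\nmax$ gives $r_0 \leq 1$, so $p(\nmax + k) \leq \pmax\, r_0^{\,k}$ for $k \geq 0$, with an analogous geometric upper bound on the left half. The next step is a variational argument: among log-concave p.m.f.s on $\mathbb{Z}$, the product $\pmax^2\,\sigma^2$ is maximized by a one-sided geometric $p(k) = (1-r)r^k$, for which a direct calculation gives $\pmax^2 \sigma^2 = r \leq 1$, yielding $\pmax \leq 1/\sigma$ with equality approached as $r \to 1$. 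By contrast, the two-sided geometric family gives only $\pmax^2 \sigma^2 \leq 1/2$, so the extremum is genuinely asymmetric.

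The main obstacle is the extremality step in the upper bound: one must rule out that some non-geometric log-concave p.m.f. could inflate $\pmax^2 \sigma^2$ past $1$. A natural attack is a two-sided rearrangement toward the log-affine family, reducing to a two-parameter optimization over the decay rates on either side of $\nmax$ and verifying that the one-sided optimum dominates the symmetric ones. This step is technically delicate, which is precisely why the present paper invokes the bound from~\cite{bobkov2022concentration} rather than reproving it.
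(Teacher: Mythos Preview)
The paper's proof is exactly what you anticipate: a one-line invocation of \cite[Theorem~1.1]{bobkov2022concentration}, with no argument supplied. So there is no approach to compare against; your sketch is additional material.

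On that sketch, one quantitative point deserves flagging. For the lower bound, Chebyshev plus pigeonhole gives $\pmax \geq (1-\alpha^{-2})/(2\alpha\sigma+1)$, and the best this delivers (at $\alpha=\sqrt{3}$) is asymptotically $1/(3\sqrt{3}\,\sigma)\approx 0.19/\sigma$, which falls short of $1/(4\sigma)=0.25/\sigma$ by a genuine multiplicative factor, not a rounding issue. Your phrase ``essentially $1/(4\sigma)$'' is therefore optimistic, and the appeal to unimodality to close the gap is vague: the pigeonhole step already holds for arbitrary p.m.f.s, and unimodality does not obviously sharpen the constant in that inequality. Reaching the sharp $1/4$ requires using log-concavity more structurally, as Bobkov--Marsiglietti--Melbourne do. Your upper-bound outline is on firmer ground: the one-sided geometric is indeed the extremizer, with $\pmax^2\sigma^2 = r \to 1$ as $r\to 1$, and you correctly identify the extremality reduction as the non-trivial step that justifies citing rather than reproving.
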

\begin{proof}
Follows immediately from \cite[Theorem 1.1.]{bobkov2022concentration}.
\end{proof}
\begin{proposition} \label{nmaxprop}
Let $X$ be a log-concave random variable on the integers with mean $\mu \in \mathbb{R}$ and variance $\sigma^2$, and let $\delta >0$. Then,
if $\sigma > 4^{1/2\delta},$ 
\begin{equation} \label{nmaxsigma2bound}
 |\nmax - \mu| < \sigma^{3/2+\delta} +1.
\end{equation}
\end{proposition}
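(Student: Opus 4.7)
The idea is to combine the pointwise lower bound $p_{\max} \geq 1/(4\sigma)$ from Lemma \ref{bobkovlemma} with Chebyshev's inequality. Since the single atom at $N_{\max}$ already carries mass $p_{\max}$, any half-line starting at $N_{\max}$ carries mass at least $p_{\max}$; on the other hand, Chebyshev forces the mass in a half-line at distance $t$ from $\mu$ to be at most $\sigma^2/t^2$. Balancing these two constraints gives $|N_{\max} - \mu| \leq 2\sigma^{3/2}$, and the hypothesis $\sigma > 4^{1/(2\delta)}$ is exactly the slack needed to absorb the constant factor $2$ into $\sigma^\delta$.

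In more detail, I would first consider the case $N_{\max} \geq \mu$ and set $t := N_{\max} - \mu \geq 0$ (if $t = 0$ there is nothing to prove). Then
$$
\frac{1}{4\sigma} \;\leq\; p_{\max} \;=\; p(N_{\max}) \;\leq\; \mathbb{P}(X \geq N_{\max}) \;\leq\; \mathbb{P}(|X - \mu| \geq t) \;\leq\; \frac{\sigma^2}{t^2},
$$
using Lemma \ref{bobkovlemma} on the left and Chebyshev on the right. Rearranging gives $t^2 \leq 4\sigma^3$, i.e.\ $t \leq 2\sigma^{3/2}$. The hypothesis $\sigma > 4^{1/(2\delta)} = 2^{1/\delta}$ is equivalent to $\sigma^\delta > 2$, so $t \leq 2\sigma^{3/2} < \sigma^{3/2+\delta}$, whence $N_{\max} < \mu + \sigma^{3/2+\delta} + 1$. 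The case $N_{\max} < \mu$ is entirely symmetric: one uses $\mathbb{P}(X \leq N_{\max}) \geq p_{\max}$ together with Chebyshev applied to $\mathbb{P}(\mu - X \geq t)$ with $t = \mu - N_{\max}$, obtaining $N_{\max} > \mu - \sigma^{3/2+\delta} - 1$.

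The only mild point to verify is that Lemma \ref{bobkovlemma} requires $\sigma^2 \geq 1$, which is guaranteed by $\sigma > 4^{1/(2\delta)} \geq 1$ for any $\delta > 0$. I do not anticipate any real obstacle here: the proof is essentially a one-line observation once one notices that Chebyshev applied to the single atom at the mode, combined with the sharp $\Theta(1/\sigma)$ bound for $p_{\max}$ of discrete log-concave laws, already pins $N_{\max}$ down to within $O(\sigma^{3/2})$ of the mean. The slack $\sigma^\delta$ is simply cosmetic so that the final bound reads as a single power of $\sigma$.
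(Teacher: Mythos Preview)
Your proposal is correct and follows essentially the same approach as the paper: both arguments combine the lower bound $p_{\max}\geq 1/(4\sigma)$ from Lemma~\ref{bobkovlemma} with Chebyshev's inequality to show that the mode cannot lie more than order $\sigma^{3/2}$ from the mean. The paper phrases this as a contradiction (assuming $N_{\max}\geq \mu+\sigma^{3/2+\delta}+1$ and deriving $\frac{1}{4\sigma}\leq \mathbb{P}(X>\mu+\sigma^{3/2+\delta})\leq \sigma^{-1-2\delta}<\frac{1}{4\sigma}$), whereas you bound $t=N_{\max}-\mu$ directly, but the content is identical.
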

\begin{proof}
Suppose for contradiction that $ |\nmax - \mu| \geq \sigma^{3/2+\delta}+1$. Then, using \eqref{bobkovbound},
$$
\mathbb{P}(|X - \mu| > \sigma^{3/2+\delta}) \geq p(\nmax) \geq \frac{1}{4\sigma}.
$$
But Chebyshev's inequality implies
$$
\mathbb{P}(|X - \mu| > \sigma^{3/2+\delta}) \leq \frac{1}{\sigma^{1 + 2\delta}} < \frac{1}{4\sigma}.
$$
\end{proof}

Below we show that for any integer-valued random variable $X$, $q \to 1$ implies $H(X) \to \infty$. 
It is not hard to see that the converse is not always true, i.e. $H(X) \to \infty$ does not necessarily imply $q \to 1$: Consider a random variable with a mass of $\frac{1}{2}$ at zero and all other probabilities equal on an increasingly large subset of $\mathbb{Z}$.
Nevertheless, using Lemma \ref{bobkovlemma}, we show that if $X$ is {log-concave} this implication is true. In fact, part \ref{1minusqupper} of Proposition \ref{qlogconcave} holds for all unimodal distributions. Clearly any log-concave distribution is unimodal, since  \eqref{logconcavity} is equivalent to the sequence $\{\frac{p_X(k+1)}{p_X(k)}\}_{k \in \mathbb{Z}}$ being non-increasing.

\noindent

\begin{proposition} \label{qlogconcave}
Suppose that the random variable $X$ has p.m.f. $p_X$ on the integers and let $q = q(p_X)$ as above. Then

\begin{enumerate} [label = (\roman*)] 
\item \label{1minusqlower} $e^{-H(X)} \leq 1-q.$
\item \label{1minusqupper} If $p_X$ is unimodal, then 
 $1 - q = \pmax. $%\leq \frac{1}{\sigma} .$ 
\end{enumerate}
\end{proposition}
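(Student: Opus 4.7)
The main bridge between $q$ and $H(X)$ in both parts of the statement is the quantity $\pmax$. The plan is to interpose it between the two sides: in (i) via the sandwich $e^{-H(X)} \leq \pmax \leq 1-q$, and in (ii) via the identity $1 - q = \pmax$ combined with Lemma \ref{bobkovlemma}.

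I would first record the standard entropy bound $H(X) = -\sum_k p_X(k) \log p_X(k) \geq -\log \pmax$, which immediately gives $e^{-H(X)} \leq \pmax$. Next, reading off the computation in the proof of Proposition \ref{qTVexpr}, one obtains the identity
\begin{equation*}
1 - q = \sum_{k \in \mathbb{Z}} (p_X(k) - p_X(k+1))_+,
\end{equation*}
with $x_+ = \max(x, 0)$. For (i), I would lower bound this sum by keeping only the indices $k \geq \nmax$ and replacing $(\cdot)_+$ by $(\cdot)$. The resulting tail is telescoping and evaluates to $p_X(\nmax) - \lim_{N\to\infty} p_X(N+1) = \pmax$, where the limit vanishes by summability. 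This yields $1-q \geq \pmax$ and completes (i).

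For (ii), log-concavity \eqref{logconcavity} forces the ratios $p_X(k+1)/p_X(k)$ to be non-increasing, and a short check (using that $\nmax$ is the \emph{last} index at which $\pmax$ is achieved) shows these ratios are $\geq 1$ for $k < \nmax$ and $< 1$ for $k \geq \nmax$. Hence $(p_X(k) - p_X(k+1))_+$ vanishes for $k < \nmax$ and equals $p_X(k) - p_X(k+1)$ for $k \geq \nmax$, so the telescoping computation above becomes an exact equality: $1 - q = \pmax$. The upper bound $\pmax \leq 1/\sigma$ then follows from Lemma \ref{bobkovlemma} when $\sigma \geq 1$, and is trivial otherwise since $\pmax \leq 1 \leq 1/\sigma$.

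I do not anticipate any serious obstacle; the entire argument is driven by the telescoping identity above together with the log-concave unimodality. The only point that requires mild care is checking that the convention underlying \eqref{logconcavity} forces connected support, so that the ratios $p_X(k+1)/p_X(k)$ are well-defined on the relevant range and $\nmax$ really plays the role of a mode.
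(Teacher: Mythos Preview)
Your proposal is correct and follows essentially the same route as the paper: both arguments interpose $\pmax$ via the chain $e^{-H(X)}\le \pmax \le 1-q$ for (i) and establish the exact identity $1-q=\pmax$ under log-concavity for (ii), then invoke Lemma~\ref{bobkovlemma}. The only cosmetic difference is that you work with the positive-part representation $1-q=\sum_k (p_X(k)-p_X(k+1))_+$ (read off from Proposition~\ref{qTVexpr}) and telescope, whereas the paper manipulates the min-sum defining $q$ directly around $\nmax$; these are two phrasings of the same computation, and your extra remark covering the case $\sigma<1$ is a nice touch the paper leaves implicit.
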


\begin{proof}
Let $m$ be a mode of $X$, that is $p(m) = \pmax$. Then 
\begin{align*}
q &=\sum_{k \leq  m - 1}{\min{\{p_X(k),p_X(k+1)\}}}  + \sum_{k \geq m}{\min{\{p_X(k),p_X(k+1)\}}} \\
&\leq \sum_{k \leq m-1}{p_X(k)}  + \sum_{k \geq m}{p_X(k+1)} \\
&= 1 - \pmax.
\end{align*}
The bound \ref{1minusqlower} follows since 
$H(X) = \mathbb{E}\bigl({\log{\frac{1}{p_X(X)}}\bigr)} \geq \log{\frac{1}{\max_k{p_X(k)}}}.$

For \ref{1minusqupper}, note that since $p_X$ is unimodal, ${p_X(k+1)} \geq {p_X(k)}$ for all $k < m$ and ${p_X(k+1)} \leq {p_X(k)}$ for all $k \geq m$.
Therefore, the inequality in part \ref{1minusqlower} is equality
%\ref{1minusqupper} follows from the upper bound in \eqref{bobkovbound}.
and \ref{1minusqupper} follows.
\end{proof}

\section{Proofs of Theorems \ref{maintheorem} and \ref{taosconjecturelog}} \label{proofsection}

Let $U^{(n)} := \sum_{i=1}^n{U_i},$ where $U_i$ are i.i.d. continuous uniforms on $(0,1).$ Let $f_{S_n+\Un}$ denote the density of $S_n+\Un.$ We approximate $f_{S_n+U^{(n)}}$ with the p.m.f., say $p_{S_n}$, of $S_n$.

\noindent
We recall that the class of discrete log-concave distributions is closed under convolution \cite{hoggar} and hence the following lemma may be applied to $S_n$.
\begin{lemma} \label{densityapproxlemma}
Let $S$ be a log-concave random variable on the integers with variance $\sigma^2 = \Var{(S)}$ and, for any $n \geq 1,$ denote by $f_{S+U^{(n)}}$ the density of $S + U^{(n)}$ on the real line. Then for any $n \geq 1$ and $x \in \mathbb{R}$,
\begin{equation} \label{densityapprox}
f_{S+\Un}(x) = p_{S}(\floor{x}) + g_n(\floor{x},x),
\end{equation}
for some $g_n:\mathbb{Z}\times\mathbb{R} \to \mathbb{R}$ satisfying
\begin{equation} \label{gkbounds}
\sum_{{k} \in \mathbb{Z}}\sup_{u\in [k,k+1)}{|g_n(k,u)|}\leq (2^{n} -2)\frac{1}{\sigma}.
\end{equation}
Moreover, if $\floor{x} \geq \nmax + n - 1,$
\begin{equation} \label{fsnleqpn}
f_{S+\Un}(x) \leq 2^np_{S}(\floor{x}-n+1).
\end{equation}
\end{lemma}

\begin{proof}

First we recall that for a discrete random variable $S$ and a continuous independent random variable $U$ with density $f_U$, $S+U$ is continuous with density 
$$
f_{S+U}(x) = \sum_{k \in \mathbb{Z}}{p_S(k)f_U(x-k)}.
$$

For $n=1$, the statement is true with $g_n = 0$. We proceed by induction on $n$ with $n=2$ as base case, which illustrates the idea better. The density of $U_1+U_2$ is $f_{U_1+U_2}(u) = u,$ for $u\in (0,1)$ and $f_{U_1+U_2}(u) = 2-u $, for $u \in [1,2)$. Thus, we have 

\begin{equation} \label{refforsecondind}
f_{S+U^{(2)}}(x) = p_{S}(\floor{x}) + (1-x+\floor{x})(p_{S}(\floor{x}-1) - p_{S}(\floor{x})).
\end{equation}
Therefore,
$$
f_{S+U^{(2)}}(x) = p_{S}(\floor{x}) + g_2(\floor{x},x),
$$
where $g_2(k,x) = (1-x+{k})(p_{S}(k-1) - p_{S}(k))$ and by Propositions \ref{qTVexpr}, \ref{qlogconcave}.\ref{1minusqupper} and Lemma \ref{bobkovlemma}
$$
\sum_{{k} \in \mathbb{Z}}\sup_{u\in[k,k+1)}|g_2(k,u)| \leq \sum_{k \in \mathbb{Z}}{|p_S(k) - p_S(k-1)|} = \|p_{S} - p_{S+1}\|_{1} \leq \frac{2}{\sigma}.
$$

Next, we have 
\begin{align} \nonumber
f_{S+U^{(n+1)}}(x) &= \int_{(0,1)}{f_{S+U^{(n)}}(x-u)du} \\ \label{twointegrals}
&= \int_{(0,1)\cap(x-\floor{x}-1,x-\floor{x})}{f_{S+U^{(n)}}(x-u)du} + \int_{(0,1)\cap(x-\floor{x},x-\floor{x}+1)}{f_{S+U^{(n)}}(x-u)du}.
\end{align}
Using the inductive hypothesis, \eqref{twointegrals} is equal to 
\begin{align} \nonumber
&(x-\floor{x}) p_S(\floor{x}) + (1-x+\floor{x}) p_S(\floor{x}-1) \\
&+ \int_{(0,1)\cap(x-\floor{x}-1,x-\floor{x})}{g_{n}(\floor{x},x-u)du} + \int_{(0,1)\cap(x-\floor{x},x-\floor{x}+1)}{g_n(\floor{x}-1,x-u)du}
\end{align}
with $g_n$ satisfying \eqref{gkbounds}. Thus, we can write 
\begin{align} \nonumber
f_{S+U^{(n+1)}}(x) & = p_S(\floor{x}) + (1-x+\floor{x})(p_S(\floor{x}-1) - p_S(\floor{x})) \\ 
&+ \int_{(0,1)\cap(x-\floor{x}-1,x-\floor{x})}{g_{n}(\floor{x},x-u)du} + \int_{(0,1)\cap(x-\floor{x},x-\floor{x}+1)}{g_n(\floor{x}-1,x-u)du} \\ 
&= p_S(\floor{x}) +g_{n+1}(\floor{x},x),
\end{align}
\noindent
where $g_{n+1}(k,x) =(1-x+k)(p_S(k-1) - p_S(k))+  \int_{(0,1)\cap(x-k-1,x-k)}{g_{n}(k,x-u)du} $\\$+ \int_{(0,1)\cap(x-k,x-k+1)}{g_n(k-1,x-u)du}$. Therefore, since $g_n$ satisfies \eqref{gkbounds}, 
\begin{equation}
\sum_{{k} \in \mathbb{Z}}\sup_{u\in[k,k+1)}|g_{n+1}({k},u)| \leq  \frac{2}{\sigma} + 2\sum_{k \in \mathbb{Z}}{\sup_{u\in[k,k+1)}|g_{n}({k},u)|} \leq  \frac{2}{\sigma} + 2(2^{n} -2)\frac{1}{\sigma}  = (2^{n+1} - 2)\frac{1}{\sigma},
\end{equation}
completing the inductive step and thus the proof of \eqref{gkbounds}.

Inequality \eqref{fsnleqpn} may be proved in a similar way by induction: For $n=2$, by \eqref{refforsecondind}
\begin{equation}
f_{S+U^{(2)}}(x)  \leq 2p_S(\floor{x}-1), 
\end{equation}
since $p_S(\floor{x}) \leq p_S(\floor{x}-1)$ for $\floor{x} \geq \nmax+1.$

By \eqref{twointegrals} and the inductive hypothesis
\begin{align}
f_{S+U^{(n+1)}}(x) &= \int_{(0,1)\cap(x-\floor{x}-1,x-\floor{x})}{f_{S+U^{(n)}}(x-u)du} + \int_{(0,1)\cap(x-\floor{x},x-\floor{x}+1)}{f_{S+U^{(n)}}(x-u)du} \\
&\leq 2^n p_S(\floor{x} - n) + 2^np_S(\floor{x}-n+1) \\
&\leq 2^{n+1}p_S(\floor{x}-n)
\end{align}
completing the proof of \eqref{fsnleqpn} and thus the proof of the lemma.
\end{proof}

%Note that for any $n\geq 2,$ $ \sup_k{\mathbb{P}(S_n = k)} \leq \pmax(X_1).$ 

\begin{lemma} \label{nzerolemma}
Let $X$ be a log-concave random variable on the integers with p.m.f. $p$, mean zero and variance $\sigma^2,$ and let  $0<\epsilon < 1/2$. If $\sigma \geq \max{\{3^{1/\epsilon},(12e^3)^{1/(1-2\epsilon)}\}},$ there is an $N_0 \in \{\nmax,\ldots,\nmax+2\lceil\sigma^2\rceil\}$ such that, for each $k \geq N_0,$ 
\begin{equation} \label{exponentiallysmallprobs}
p(k+1) \leq \biggl(1-\frac{1}{\sigma^{2-\epsilon}}\biggr) p(k).
\end{equation}

Similarly, there is an $N_0^- \in \{\nmax-2\lceil\sigma^2\rceil,\ldots,\nmax\}$ such that, for each $k \leq N_0^-,$ 
$$p(k-1) \leq \biggl(1-\frac{1}{\sigma^{2-\epsilon}}\biggr) p(k).$$
\end{lemma}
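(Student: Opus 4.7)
The plan is to produce the threshold $N_0$ by contradiction, after first using log-concavity to reduce the question to a single-point condition on the ratio sequence $r(k) := p(k+1)/p(k)$. By the log-concavity inequality \eqref{logconcavity}, $r$ is non-increasing in $k$. Consequently, if even one index $N_0 \in \{\nmax, \ldots, \nmax + 2\lceil\sigma^2\rceil\}$ satisfies $r(N_0) \leq 1 - 1/\sigma^{2-\epsilon}$, the monotonicity of $r$ automatically gives $r(k) \leq r(N_0) \leq 1 - 1/\sigma^{2-\epsilon}$ for every $k \geq N_0$, which is precisely the conclusion \eqref{exponentiallysmallprobs}.

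Now suppose for contradiction that no such $N_0$ exists, so that $r(\nmax + i) > 1 - 1/\sigma^{2-\epsilon}$ for all $i \in \{0, 1, \ldots, M\}$ with $M := 2\lceil\sigma^2\rceil$. Chaining these $M + 1$ inequalities multiplicatively yields
\[
p(\nmax + j) > \bigl(1 - 1/\sigma^{2-\epsilon}\bigr)^j p(\nmax), \qquad j = 0, 1, \ldots, M + 1.
\]
Summing this geometric progression and invoking the lower bound $p(\nmax) \geq 1/(4\sigma)$ from Lemma \ref{bobkovlemma} gives
\[
\sum_{j=0}^{M+1} p(\nmax + j) > \frac{\sigma^{1-\epsilon}}{4}\Bigl(1 - (1-1/\sigma^{2-\epsilon})^{M+2}\Bigr).
\]
Since $M + 2 \geq 2\sigma^2$, the elementary inequality $(1-x)^n \leq e^{-nx}$ bounds the parenthesized factor from below by $1 - e^{-2\sigma^\epsilon}$, and the hypothesis $\sigma \geq 3^{1/\epsilon}$ ensures $\sigma^\epsilon \geq 3$, so this factor stays bounded uniformly away from $0$. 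The second part of the hypothesis, $\sigma \geq (12e^3)^{1/(1-2\epsilon)}$, is then exactly what is needed to force the whole right-hand side to exceed $1$, contradicting the fact that the $p(\nmax + j)$ are values of a probability distribution.

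The companion statement for $N_0^-$ follows by a mirrored argument: log-concavity also makes the ratio $s(k) := p(k-1)/p(k)$ non-decreasing in $k$, so it again suffices to exhibit a single $N_0^- \in \{\nmax - 2\lceil\sigma^2\rceil, \ldots, \nmax\}$ with $s(N_0^-) \leq 1 - 1/\sigma^{2-\epsilon}$, and the contradiction proceeds via the analogous lower bound $p(\nmax - j) > (1-1/\sigma^{2-\epsilon})^j p(\nmax)$ summed in the same way. Conceptually the proof is essentially complete once the monotonicity reduction and the geometric summation are identified; the main obstacle in writing out the full argument is the numerical bookkeeping required to verify that the specific thresholds $3^{1/\epsilon}$ and $(12e^3)^{1/(1-2\epsilon)}$ do suffice to close the final inequality, accounting for the ceiling in $2\lceil\sigma^2\rceil$, the constant $1/(4\sigma)$ coming from Lemma \ref{bobkovlemma}, and the $1-e^{-2\sigma^\epsilon}$ correction from the partial geometric sum.
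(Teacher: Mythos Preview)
Your argument is correct, and it takes a genuinely different---and simpler---route to the contradiction than the paper does. Both proofs begin identically: use the monotonicity of $r(k)=p(k+1)/p(k)$ to reduce to a single-point condition, assume by way of contradiction that $p(\nmax+j)>\theta^j p(\nmax)$ for $0\le j\le 2\lceil\sigma^2\rceil+1$ with $\theta=1-1/\sigma^{2-\epsilon}$, and invoke $p(\nmax)\ge 1/(4\sigma)$ from Lemma~\ref{bobkovlemma}. The divergence is in which quantity is shown to be too large. You sum the probabilities themselves and compare with the constraint $\sum_j p(\nmax+j)\le 1$; the paper instead feeds the lower bound into the \emph{variance} formula $\sigma^2=\sum_k k^2 p(k)$, bounding $\sigma^2$ from below by the weighted sum $\frac{1}{4\sigma}\sum_{j}( \nmax+j)^2\theta^j$, and then derives a lower bound exceeding $\sigma^2$. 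This variance route forces the paper to control the location of $\nmax$ via Proposition~\ref{nmaxprop} (which is where the mean-zero hypothesis actually enters), and the interplay between that proposition's parameter $\delta$ and the lemma's $\epsilon$ is what produces the specific threshold $(12e^3)^{1/(1-2\epsilon)}$.

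Your approach avoids Proposition~\ref{nmaxprop} and never uses that the mean is zero; it also shows that the stated thresholds are far from tight for your method, since $\frac{\sigma^{1-\epsilon}}{4}(1-e^{-2\sigma^\epsilon})>1$ requires only $\sigma^{1-\epsilon}$ mildly larger than $4$, while the hypothesis gives $\sigma^{1-\epsilon}\ge\sigma^{1-2\epsilon}\ge 12e^3$. So your proof is more elementary and actually yields a stronger statement; the trade-off is that the particular constants in the lemma's hypothesis no longer arise naturally from the argument. One small inaccuracy: the phrase ``exactly what is needed'' overstates things---the second threshold is comfortably sufficient in your argument, not sharp.
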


\begin{proof}
Let $\theta = 1-\frac{1}{\sigma^{2-\epsilon}}.$ It suffices to show that there is an $N_0 \in \{\nmax,\ldots,\nmax+2\lceil\sigma^2\rceil\}$ such that $p(N_0+1) \leq \theta p(N_0),$ since then, for each $k \geq N_0, \frac{p(k+1)}{p(k)} \leq \frac{p(N_0+1)}{p(N_0)} \leq \theta$ by log-concavity.

Suppose for contradiction that $p(k+1) \geq \theta p(k)$ for each $ k \in \{\nmax,\ldots,\nmax+2\lceil\sigma^2\rceil\}$. 
Then, we have, using \eqref{bobkovbound}
\begin{align}
\sigma^2 &= \sum_{k \in \mathbb{Z}}{k^2p(k)} \geq \sum_{k = \nmax}^{\nmax+2\lceil\sigma^2\rceil}{k^2p(k)} \geq \sum_{k = \nmax}^{\nmax+2\lceil\sigma^2\rceil}{k^2\theta^{k-\nmax}\frac{1}{4\sigma}} \\  \label{nmaxmtheta}
&= \sum_{m=0}^{2\lceil\sigma^2\rceil}(\nmax+m)^2\theta^m\frac{1}{4\sigma} \geq \sum_{m=\max\{0,-\nmax\}}^{2\lceil\sigma^2\rceil}(\nmax+m)^2\theta^m\frac{1}{4\sigma}.
\end{align}
Now we use Proposition \ref{nmaxprop} with $\delta >0$ to be chosen later. Thus, the right-hand side of \eqref{nmaxmtheta} is at least
\begin{align}\nonumber
&\sum_{m=\lceil\sigma^{3/2+\delta}+1\rceil}^{2\lceil\sigma^2\rceil}(\nmax+m)^2\theta^m\frac{1}{4\sigma} \\
&\geq \sum_{m=\lceil\sigma^{3/2+\delta}\rceil+1}^{2\lceil\sigma^2\rceil}(m - \lceil\sigma^{3/2+\delta}\rceil-1)^2\theta^m\frac{1}{4\sigma} \\
&= \sum_{k=0}^{2\lceil\sigma^2\rceil - \lceil\sigma^{3/2+\delta}\rceil-1}k^2\theta^{k+\lceil\sigma^{3/2+\delta}\rceil+1}\frac{1}{4\sigma} \\ \label{suboptimal}
&\geq 
\theta^{\lceil\sigma^{3/2+\delta}\rceil+1}\frac{1}{4\sigma}\sum_{k=1}^{\lceil\sigma^2\rceil -1 }k\theta^{k}  \\  \label{thetasexpr} %%%HERE
&= \frac{\theta^{\lceil\sigma^{3/2+\delta}\rceil+1}}{4\sigma}\Bigl[\theta \frac{1-\theta^{\lceil\sigma^2\rceil}}{(1-\theta)^2} - \lceil\sigma^2\rceil\frac{\theta^{\lceil\sigma^2\rceil}}{(1-\theta)} \Bigr].
\end{align}
Using the elementary bound $(1-x)^y \geq e^{-2xy},$ for $0<x < \frac{\log{2}}{2},y>0$, we see that 
$$
\theta^{\lceil\sigma^{3/2+\delta}\rceil+1} = (1-\frac{1}{\sigma^{2-\epsilon}})^{\lceil\sigma^{3/2+\delta}\rceil+1} \geq e^{-2\frac{\sigma^{3/2+\delta}+2}{\sigma^{2-\epsilon}}} \geq e^{-3},
$$
where the last inequality holds as long as $\epsilon+\delta < 1/2.$ Choosing $\delta = 1/4-\epsilon/2,$ we see that the assumption of Proposition \ref{nmaxprop} is satisfied for $\sigma>16^{1/(1-2\epsilon)}$ and thus for $\sigma>(12e^3)^{1/(1-2\epsilon)}$ as well.
Furthermore, using $(1-x)^y \leq e^{-xy}, 0<x<1, y >0,$ we get $\theta^{\sigma^2} \leq e^{-\sigma^{\epsilon}}.$ Thus, the right-hand side of \eqref{thetasexpr} is at least
\begin{align} \nonumber
&\frac{1}{4e^3\sigma}\Bigl[\Bigl(1-\frac{1}{\sigma^{2-\epsilon}}\Bigr) \bigl(1-e^{-\sigma^{\epsilon}}\bigr)\sigma^{4-2\epsilon} - \sigma^{4-\epsilon}e^{-\sigma^{\epsilon}} - \sigma^{2-\epsilon}e^{-\sigma^{\epsilon}}\Bigr] \\ \nonumber
&\geq \frac{\sigma^{3-2\epsilon}}{4e^3}\Bigl[1-2\frac{\sigma^{\epsilon}}{e^{\sigma^\epsilon}} -\frac{1}{\sigma^{2-\epsilon}}\Bigr] \\ \label{3tothe1overeps}
&> \sigma^2\frac{\sigma^{1-2\epsilon}}{12e^3} \\ \label{geqsigma2}
&\geq \sigma^2,
\end{align}
where \eqref{3tothe1overeps} holds for $\sigma > 3^{\frac{1}{\epsilon}}$, since then $\frac{\sigma^{\epsilon}}{e^{\sigma^{\epsilon}}} \leq \frac{1}{4}$ and $\sigma^{-2+\epsilon} < \sigma^{-1} < \frac{1}{9}$. Finally, \eqref{geqsigma2} holds for $\sigma > (12e^3)^{\frac{1}{1-2\epsilon}},$ getting the desired contradiction.

For the second part, apply the first part to the log-concave random variable $-X$. 
\end{proof}

\begin{remark}
The bound \eqref{exponentiallysmallprobs} may be improved due to the suboptimal step \eqref{suboptimal}, e.g. by means of the identity 
$$
\sum_{k=1}^M{k^2\theta^k} = \theta\frac{{d}}{d\theta}\biggl(\sum_{k=1}^M{k\theta^k}\biggr) = \theta \frac{{d}}{d\theta} \Biggl(\theta\frac{{d}}{d\theta}\biggl(\sum_{k=1}^M{\theta^k}\biggr) \Biggr) = \theta \frac{{d}}{d\theta} \Biggl(\theta\frac{{d}}{d\theta}\biggl(\frac{\theta-\theta^{M+1}}{1-\theta}\biggr) \Biggr).
$$
It is, however, sufficient for our purpose as it will only affect a higher-order term in the proof of Theorem \ref{maintheorem}. 
\end{remark}

We are now ready to give the proof of Theorem \ref{maintheorem} and of our main result, Theorem \ref{taosconjecturelog}. 
%{\color{red}E.g. for $\sqrt{n}\sigma > 50^6$ this is $\leq 2^n144\frac{\log{\sqrt{n}\sigma}}{\sigma \sqrt{n}}$, which gives $\sqrt{n}\sigma \geq 2^n144\frac{\log{\frac{1}{\epsilon}}}{\epsilon}$ \\
%OVERALL Suffices $H(X_1) \geq \log{\frac{1}{\epsilon}} + \log{\log{\frac{1}{\epsilon}}} + n + 26 $}
\begin{proof}[Proof of Theorem \ref{maintheorem}]
Assume without loss of generality that $X_1$ has zero mean. 
Let $F(x) = x\log{\frac{1}{x}}, x>0$ and note that $F(x)$ is non-decreasing for $x \leq 1/e$. As before denote $S_n = \sum_{i=1}^n{X_i},$  $\Un = \sum_{i=1}^n{U_i}$ and let $f_{S_n+\Un}$ be the density of $S_n+\Un$ on the reals. We have 
\begin{align} \nonumber
&h(X_1+\cdots+X_n+U_1+\cdots+U_n)\\ \label{sumbreak}
&= \sum_{k \in (-5\ns,5\ns)}{\int_{[k,k+1)}{F(f_{S_n+\Un}(x)) dx}} + \sum_{|k| \geq 5\ns}{\int_{[k,k+1)}{F(f_{S_n+\Un}(x)) dx}}.
\end{align}

First we will show that the ``entropy tails", i.e. the second term in \eqref{sumbreak}, vanish as $\sigma^2$ grows large. 
To this end, note that for $k \geq 5\ns,$ we have $p_{S_n}(k+1) \leq p_{S_n}(k),$ since by Proposition \ref{nmaxprop} applied to the log-concave random variable $S_n$, $\nmax \leq \ns+1$ as long as $\sqrt{n}\sigma>4$. 
Thus, by \eqref{fsnleqpn}, for $k\geq 5\ns$ and $x \in[k,k+1),$ $f_{S_n+\Un}(x) \leq 2^n\ps(k-n+1)$.
Hence, for 
\begin{equation} \label{pmaxsmall}
\sigma > \frac{2^{n}}{\sqrt{n}}e,
\end{equation}
we have, using the monotonicity of $F$ for $x\leq \frac{1}{e}$,
\begin{align} \label{monofFand}
0 &\leq \sum_{k \geq 5\ns}{\int_{[k,k+1)}{F(f_{S_n+\Un}(x)) dx}} \leq \sum_{k \geq 5\ns}{F\bigl(2^n p_{S_n}(k-n+1)\bigr) } \\ \label{ctsdiscretetails}
&= \sum_{k \geq 5\ns}{2^np_{S_n}(k-n+1)\log{\frac{1}{2^np_{S_n}(k-n+1)}}}\\ \label{thetalemmasigmacondition}
&\leq 2^n\frac{1}{\sqrt{n}\sigma} \sum_{k \geq 5\ns}{\theta^{k-4\nsceil}\log{\frac{\sqrt{n}\sigma}{2^n\theta^{k-4\nsceil}}}} \\
&=  2^n\frac{\log{\frac{1}{\theta}}}{\sqrt{n}\sigma} \sum_{m \geq \ns}{m\theta^{m}} +  2^n\frac{\log{\frac{\sqrt{n}\sigma}{2^n}}}{\sqrt{n}\sigma} \sum_{m \geq \ns}{\theta^{m}}\\
&\leq 2^{n+1} \frac{\log{\sqrt{n}\sigma}}{\sqrt{n}\sigma}     \Bigl[ \frac{\theta^{\nsceil+1}}{(1-\theta)^2} +\nsceil \frac{\theta^{\nsceil}}{1-\theta} +\frac{\theta^{\nsceil}}{1-\theta}      \Bigr] \\
&\leq 2^{n+1} \frac{\log{\sqrt{n}\sigma}}{\sqrt{n}\sigma}e^{-(\sqrt{n}\sigma)^{\epsilon}}  \Bigl[ (\sqrt{n}\sigma)^{4-2\epsilon} + (n\sigma^2+1)(\sqrt{n}\sigma)^{2-\epsilon} + (\sqrt{n}\sigma)^{2-\epsilon}    \Bigr]  \\ %%%%% CHECK THE LAST TWO LINES AGAIN
&\leq 2^{n+3} \frac{\log{\sqrt{n}\sigma}}{\sqrt{n}\sigma}e^{-(\sqrt{n}\sigma)^{\epsilon}} (\sqrt{n}\sigma)^{4-\epsilon} \\ \label{tailfinalbound}
&\leq 2^{n+4} e^{-(\sqrt{n}\sigma)^{1/5}} (\sqrt{n}\sigma)^{3}.
\end{align}
Here \eqref{thetalemmasigmacondition} holds for 
\begin{equation} \label{sigma3tothe7} %%% IS SQRT{N} OK ??? 
\sqrt{n}\sigma > 3^7
\end{equation}
 with $\theta = 1-\frac{1}{(\sqrt{n}\sigma)^{2-\epsilon}}=1-\frac{1}{(\sqrt{n}\sigma)^{9/5}}$, where we have used Lemma \ref{nzerolemma} with $\epsilon = 1/5$ (which makes the assumption approximately minimal). In particular, repeated application of \eqref{exponentiallysmallprobs} yields \\
$p_{S_n}(k-n+1) \leq \theta^{k-4\nsceil}p_{S_n}\bigl(4\nsceil-n+1\bigr) \leq \frac{\theta^{k-4\nsceil}}{\sqrt{n}\sigma}.$

We bound the left tail in the exact the same way, using the second part of Lemma \ref{nzerolemma}:
\begin{equation}\label{lefttail}
0 \leq \sum_{k \leq -5\ns}{\int_{[k,k+1)}{F(f_{S_n+\Un}(x)) dx}} \leq 2^{n+4} e^{-(\sqrt{n}\sigma)^{1/5}} (\sqrt{n}\sigma)^{3}.
\end{equation}

Next we will show that the first term in \eqref{sumbreak} is approximately $H(S_n)$ to complete the proof. 
We have 
\begin{align} \nonumber
&\sum_{k \in (-5\ns,5\ns)}{\int_{[k,k+1)}{F(f_{S_n+\Un}(x)) dx}}
= \log{\ns}\int_{(-\lfloor5\ns\rfloor,\lfloor 5\ns \rfloor +1)}{f_{S_n+\Un}(x)dx}  \\ 
&+ \sum_{k \in  (-5\ns,5\ns)}{\int_{[k,k+1)}{F(f_{S_n+\Un}(x)) - f_{S_n+\Un}(x)\log{\ns}dx}} \\ \nonumber
&= \log{\ns}\mathbb{P}\bigl(S_n+\Un \in  (-\lfloor 5\ns \rfloor,\lfloor 5\ns \rfloor +1)\bigr)  \\ \label{minusaddns}
&+ \sum_{k \in  (-5\ns,5\ns)}{\int_{[k,k+1)}{F(f_{S_n+\Un}(x)) - f_{S_n+\Un}(x)\log{\ns}dx}}.
\end{align}
Now we will apply the estimate of Lemma \ref{elementaryestimatelemma}, which is stated and proved in the Appendix, to the integrand of the second term in \eqref{minusaddns} with $G(x) = F(x) - x\log{(n\sigma^2)},$ $\mu = \frac{1}{11\sigma\sqrt{n}}, D = 2^n\sqrt{n}\sigma, M = \ns, a = f_{S_n+\Un}(x)$ and $b=p_{S_n}(k).$
We obtain, using Lemma \ref{densityapproxlemma},
\begin{align} \nonumber
& \sum_{k \in  (-5\ns,5\ns)}{\Bigl|\int_{[k,k+1)}{G(f_{S_n+\Un}(x))dx}} - {G(p_{S_n}(k))} \Bigr| \\ \label{importantterms}
&\leq 11\ns\frac{2\log{(11\sigma\sqrt{n})}}{11\sigma^3n\sqrt{n}} + \sum_{k \in \mathbb{Z}}{\int_{[k,k+1)}|f_{S_n+\Un}(x) - p_{S_n}(k)|dx}\Bigl(\log{(11\sigma\sqrt{n})} + \log(e2^n\sigma\sqrt{n})\Bigr) \\ \label{errorterm}
& \leq \frac{2\log{(11\sigma\sqrt{n})}}{\sigma\sqrt{n}} + \sum_{k\in \mathbb{Z}}{\sup_{x \in [k,k+1)}|g_n(k, x)|}\Bigl(\log{(11\sigma\sqrt{n})} + \log(e2^n\sigma\sqrt{n})\Bigr) \\
&\leq \frac{2\log{(11\sigma\sqrt{n})}}{\sigma\sqrt{n}} + \frac{2^{n+1}}{\sigma\sqrt{n}}\log(11e2^n\sigma\sqrt{n}) \\
&\leq  \frac{3}{4}\frac{2^{n+2}}{\sigma\sqrt{n}}\log(11e2^{n}\sigma\sqrt{n}) \\ \label{lastmucorr}
&\leq  \frac{2^{n+2}}{\sigma\sqrt{n}}\log((11e)^{3/4}2^{n}\sigma^{3/4}\sqrt{n}) \leq \frac{2^{n+2}}{\sigma\sqrt{n}}\log(2^{n+2}\sigma\sqrt{n}),
\end{align}
where $g_n(k,x)$ is given by Lemma \ref{densityapproxlemma} applied to the log-concave random variable $S_n$ and therefore 
$\sum_k{\sup_{x \in [k,k+1)}g_n(k, x)} \leq \frac{2^n}{\sigma\sqrt{n}}.$ In the last inequality in \eqref{lastmucorr} we have used that $\frac{(11e)^{3/4}}{\sigma^{1/4}} \leq 4,$ for $\sigma > 3^7$.
Therefore, by \eqref{minusaddns} and \eqref{lastmucorr},
\begin{align} \nonumber
&\Bigl| H(S_n) - \sum_{k \in (-5\ns,5\ns)}{\int_{[k,k+1)}{F(f_{S_n+\Un}(x)) dx}} \Bigr| \\ 
\nonumber
&\leq \Bigl| H(S_n) -\sum_{k \in  (-5\ns,5\ns)}{G(p_{S_n}(k))} -  \log{\ns}\mathbb{P}\bigl(S_n+\Un \in  (-\lfloor 5\ns \rfloor,\lfloor 5\ns \rfloor +1)\bigr) \Bigr| 
\\
&+ \frac{2^{n+2}}{\sigma\sqrt{n}}\log(2^{n+2}\sigma\sqrt{n}) 
\end{align}
\begin{align}
\nonumber
&\leq   \Bigl| H(S_n) -\sum_{k \in  (-5\ns,5\ns)}{F(p_{S_n}(k))}\Bigr| + \frac{2^{n+2}}{\sigma\sqrt{n}}\log(2^{n+2}\sigma\sqrt{n})   \\
& + \log{\ns}\Bigl| \mathbb{P}\bigl(S_n+\Un \in  (-\lfloor 5\ns \rfloor,\lfloor 5\ns \rfloor+1)\bigr) - \mathbb{P}\bigl(S_n\in  (-5\ns,5\ns+ 1)\bigr)\Bigr|\\ \nonumber
&\leq \sum_{|k| \geq 5\ns}{F(p_{S_n}(k))} + \frac{2^{n+2}}{\sigma\sqrt{n}}\log(2^{n+2}\sigma\sqrt{n}) \\ \label{lastHSnabs}
&+ \log{\ns}\Bigl| \mathbb{P}\bigl(S_n+\Un \in  (-\lfloor 5\ns \rfloor,\lfloor 5\ns \rfloor+1)\bigr) - \mathbb{P}\bigl(S_n\in  (-5\ns,5\ns+ 1)\bigr)\Bigr|.
\end{align}
But, in view of \eqref{ctsdiscretetails}, we can bound the discrete tails in the same way: 
\begin{equation} \label{discretetailsbound}
\sum_{|k| \geq 5\ns}{F(p_{S_n}(k))} \leq 2^{n+5} e^{-(\sqrt{n}\sigma)^{1/5}} (\sqrt{n}\sigma)^{3}.
\end{equation}
\noindent
Finally, note that by Chebyshev's inequality
\begin{align} \label{probscheb}
0\leq \mathbb{P}\bigl(S_n+\Un \notin  (-\lfloor5\ns\rfloor,\lfloor5\ns\rfloor + 1)\bigr) &\leq \mathbb{P}\bigl(|S_n+\Un - \frac{n}{2}| > 4\ns\bigr) \leq \frac{1}{8\ns}
\end{align}
and the same upper bound applies to $\mathbb{P}\bigl(S_n\notin  (-5\ns,5\ns+ 1)\bigr)$. Since both probabilities inside the absolute value in \eqref{lastHSnabs} are also upper bounded by $1$,
replacing the bounds \eqref{discretetailsbound} and \eqref{probscheb} into \eqref{lastHSnabs}, we get 
\begin{align} \nonumber
&\Bigl| H(S_n) - \sum_{k \in (-5\ns,5\ns)}{\int_{[k,k+1)}{F(f_{S_n+\Un}(x)) dx}} \Bigr| \\ 
&\leq  2^{n+5} e^{-(\sqrt{n}\sigma)^{1/5}} (\sqrt{n}\sigma)^{3} + \frac{2^{n+2}}{\sigma\sqrt{n}}\log(2^{n+2}\sigma\sqrt{n}) +\frac{\log{\ns}}{8\ns}.
\end{align}

In view of \eqref{sumbreak} and the bounds on the continuous tails \eqref{tailfinalbound},\eqref{lefttail} we conclude 
\begin{align}
\bigl|h(S_n+\Un) - H(S_n)\bigr| \leq 2^{n+6} e^{-(\sqrt{n}\sigma)^{1/5}} (\sqrt{n}\sigma)^{3} + \frac{2^{n+2}}{\sigma\sqrt{n}}\log(2^{n+2}\sigma\sqrt{n}) +\frac{\log{\ns}}{8\ns}%%% 2X each tail bound !!!!
\end{align}
as long as \eqref{pmaxsmall} and \eqref{sigma3tothe7} are satisfied, that is as long as $\sigma > \max\{2^{n+2}/\sqrt{n},3^7/\sqrt{n}\}$.

\end{proof}
%{\color{red} 	FORGOT THE $\geq$ TODO !!!!!!!!!!!!}

\begin{remark}
The exponent in \eqref{tailfinalbound} can be improved due to the suboptimal step \eqref{suboptimal} in Lemma \ref{nzerolemma}. However, this is only a third-order term and therefore the rate in Theorem \ref{maintheorem} would still be of the same order. 
\end{remark}

%
%
%\begin{align} \nonumber
%&H(S_n)\\
% &+ \log{\ns}\Bigl[\mathbb{P}\bigl(S_n+\Un \in  (-5\ns,5\ns)\bigr) - \mathbb{P}\bigl(S_n \in  (-5\ns,5\ns)\bigr)\Bigr]  - \sum_{|k| > 5\ns}{F(p_{S_n}(k))},
%\end{align}
%where the discrete entropy tails, i.e. the last term, can be bounded above in a similar way as the continuous tails (see \eqref{ctsdiscretetails}) by two times the upper bound in \eqref{tailfinalbound}. Moreover, both probabilities in the brackets can be bounded below crudely, using Chebyshev's inequality, by 
%\begin{equation} \label{probsbychebs}
%1-\frac{1}{8n\sigma^2}.
%\end{equation}
%Putting all these together we obtain that 

\begin{proof}[Proof of Theorem \ref{taosconjecturelog}.]
Let $U_1,\ldots,U_n$ be continuous i.i.d. uniforms on $(0,1).$ Then by the generalised entropy power inequality for continuous random variables \cite{artstein, mokshayepi}
\begin{equation}
h\Bigl(\frac{X_1+\cdots+X_{n+1}+U_1+\cdots+U_{n+1}}{\sqrt{n+1}}\Bigr) \geq h\Bigl(\frac{X_1+\cdots+X_{n}+U_1+\cdots+U_{n}}{\sqrt{n}}\Bigr).
\end{equation}
But by the scaling property of differential entropy \cite{cover} this is equivalent to 
\begin{equation} \label{finalscaling}
 h(X_1+\cdots+X_{n+1} + U_1+\cdots+U_{n+1}) \geq  h(X_1+\cdots+X_{n} + U_1+\cdots+U_{n}) + \frac{1}{2}\log{\Bigl(\frac{n+1}{n}\Bigr)}.
\end{equation}
Now we claim that for every $n\geq 1$, if $H(X_1) \geq \log{\frac{2}{\epsilon}} + \log{\log{\frac{2}{\epsilon}}} + n + 26$ then
\begin{equation} \label{endofproof}
| h(X_1+\cdots+X_{n} + U_1+\cdots+U_{n}) - H(X_1+\cdots+X_{n})| \leq \frac{\epsilon}{2}.
\end{equation} 
Then the result follows from \eqref{endofproof}, applied to both sides of \eqref{finalscaling} (for $n$ and $n+1$ respectively). 

To prove the claim \eqref{endofproof} we invoke Theorem \ref{maintheorem}. To this end let $n \geq 1$ and assume that $H(X_1) \geq \log{\frac{2}{\epsilon}} + \log{\log{\frac{2}{\epsilon}}} + n + 26$ . First we note, that since \cite{cover}
\begin{equation} \label{rate2}
H(X_1) = h(X_1 + U_1) \leq \frac{1}{2}\log{(2\pi e(\sigma^2 +1/12))},
\end{equation}
we have $e^{H(X_1)} \leq 6\sigma$ provided that $\sigma > 0.275$. Thus, $H(X_1) \geq 26$ implies $\sigma > 50^6 > 90^5$. Therefore the assumptions of the theorem are satisfied and we get 
\begin{align} \nonumber
&| h(X_1+\cdots+X_{n} + U_1+\cdots+U_{n}) - H(X_1+\cdots+X_{n})| \\
&\leq 2^{n+6} e^{-(\sqrt{n}\sigma)^{1/5}} (\sqrt{n}\sigma)^{3} + \frac{2^{n+2}}{\sigma\sqrt{n}}\log(2^{n+2}\sigma\sqrt{n}) +\frac{\log{\ns}}{8\ns} \\ \label{numericaljusts}
&\leq \bigl(2^6+2^3+1\bigr)2^{n}\frac{\log{\sqrt{n}\sigma}}{\sqrt{n}\sigma} 
= 73\cdot2^n\frac{\log{\sqrt{n}\sigma}}{\sqrt{n}\sigma}.
\end{align}
In \eqref{numericaljusts} we used the elementary fact that for $x \geq 90^5$, $\frac{x^3}{e^{x^{1/5}}} \leq \frac{1}{x} \leq \frac{\log{x}}{x}$ to bound the first term and the assumption 
$\sigma\sqrt{n} \geq 2^{n+2}$ to bound the second term. 
Thus, by assumption $\sigma \geq \frac{e^{H(X_1)}}{6} \geq \frac{2}{\epsilon}\log{\frac{2}{\epsilon}}e^{n+24}$ and since $ \frac{\log{x}}{x}$ is non-increasing for $x > {e}$, we obtain by \eqref{numericaljusts}
\begin{align} 
&| h(X_1+\cdots+X_{n} + U_1+\cdots+U_{n}) - H(X_1+\cdots+X_{n})|  \\
&\leq 73\cdot 2^n\frac{\log{\frac{2}{\epsilon}} + \log{\log{\frac{2}{\epsilon}}} + n + 24}{\frac{2}{\epsilon}\log{\frac{2}{\epsilon}e^ne^{24}}} \\
&\leq \frac{\epsilon}{2}\Bigl[\frac{146}{(\frac{e}{2})^ne^{24}}  + \frac{n73}{\log{\frac{2}{\epsilon}}(\frac{e}{2})^ne^{24}} + \frac{1752}{\log{\frac{2}{\epsilon}}(\frac{e}{2})^ne^{24}}\Bigr] \\
&< \frac{\epsilon}{2}
\end{align}
proving the claim \eqref{endofproof} and thus the theorem. 

\end{proof}

\appendixpage

\begin{appendices}
%%%%%%%%%%%%%%%%%%%%%%%%%%%%%%%%%%%%%%%%%%%%%%%%%%%%%%%%%%%%%%%%%%%%%%%%

\section{An elementary Lemma}

Here we prove the following Taylor-type estimate that we used in the proof of Theorem \ref{maintheorem}. A similar estimate was used in \cite{tao_sumset_entropy}.
\begin{lemma} \label{elementaryestimatelemma}
Let $D, M \geq 1$ and, for $x > 0,$ consider $G(x) = F(x) -x\log{M}$, where $F(x) = -x\log{x}.$
Then, for $0 \leq a,b \leq \frac{D}{M}$ and any $0 < \mu < \frac{1}{e},$
we have the estimate
\begin{equation} 
|G(b) - G(a)| \leq  \frac{2\mu}{M}\log{\frac{1}{\mu}} + |b-a|\bigl[\log{\frac{1}{\mu}} + \log{(eD)}\bigr].
\end{equation}

\end{lemma}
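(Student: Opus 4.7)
My plan is to view $G(x) = -x\log(xM)$ and exploit the fact that its derivative blows up only as $x\to 0^+$, so the lemma should split into a ``flat'' Lipschitz part on $[\mu/M, D/M]$ and a ``near zero'' part on $[0,\mu/M]$ where $G$ itself is small.

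\textbf{Step 1: Shape of $G$.} I would begin by computing $G'(x) = -\log(exM)$, so that $G$ is increasing on $(0, 1/(eM))$, decreasing on $(1/(eM), \infty)$, and attains its maximum value $G(1/(eM)) = 1/(eM)$. Since $0 < \mu < 1/e$, the interval $[0,\mu/M]$ lies entirely in the increasing regime and $G$ is nonnegative there, so that for every $x \in [0, \mu/M]$,
\[
0 \le G(x) \le G(\mu/M) = \frac{\mu}{M}\log\frac{1}{\mu}.
\]

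\textbf{Step 2: Lipschitz bound on $[\mu/M, D/M]$.} For $x$ in this range, $exM \in [e\mu, eD]$, so
\[
|G'(x)| = |\log(exM)| \le \max\Bigl\{\log\tfrac{1}{e\mu},\, \log(eD)\Bigr\} \le \log\tfrac{1}{\mu} + \log(eD),
\]
using $\mu < 1/e < 1 \le D$ so both terms on the right are nonnegative. Integrating gives $|G(y) - G(x)| \le |y-x|\bigl[\log\tfrac{1}{\mu} + \log(eD)\bigr]$ for all $x,y \in [\mu/M, D/M]$.

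\textbf{Step 3: Case analysis.} Assume WLOG $a \le b$. If $b \le \mu/M$, Step 1 gives $|G(b)-G(a)| \le (\mu/M)\log(1/\mu)$. If $a \ge \mu/M$, Step 2 yields the desired Lipschitz bound directly. In the mixed case $a < \mu/M < b$, I would write
\[
|G(b) - G(a)| \le |G(b) - G(\mu/M)| + |G(\mu/M) - G(a)|
\]
and bound the first summand by $(b - \mu/M)\bigl[\log\tfrac{1}{\mu} + \log(eD)\bigr] \le (b-a)\bigl[\log\tfrac{1}{\mu} + \log(eD)\bigr]$ via Step 2, and the second summand by $(\mu/M)\log(1/\mu)$ via Step 1. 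Summing gives the claim, with the factor $2$ in front of $\mu/M\log(1/\mu)$ leaving slack.

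There is no real obstacle here; the only thing to be careful about is ensuring the bound on $|G'|$ on $[\mu/M, D/M]$ is genuinely controlled by $\log(1/\mu) + \log(eD)$, which requires checking signs and using $D \ge 1$ and $\mu < 1/e$ so that both $\log(1/(e\mu))$ and $\log(eD)$ are nonnegative, letting us pass from the maximum to the sum.
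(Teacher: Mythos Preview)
Your proof is correct and follows essentially the same strategy as the paper: split at the threshold $\mu/M$, use that $G$ is small and monotone on $[0,\mu/M]$, and use a derivative bound on $[\mu/M, D/M]$. The only cosmetic difference is that in the mixed case $a<\mu/M<b$ the paper does a sub-case split according to the sign of $G(b)-G(a)$ (invoking the mean value theorem directly on $[a,b]$ when $G(b)<G(a)$), whereas you route through $\mu/M$ via the triangle inequality; both yield the stated bound.
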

\begin{proof}

Note that $\p{G}(x) = -\log{x} - 1 - \log{M}$ , which is non-negative for $x < \frac{1}{eM}$.\\
We will consider two cases separately. 

The first case is when either $a < \frac{\mu}{M}$ or $b < \frac{\mu}{M}$. Assume without loss of generality that $a < \frac{\mu}{M}$. Then if $b < \frac{\mu}{M}$ as well, we have $|G(b) - G(a)| \leq G(a) + G(b) \leq \frac{2\mu}{M}\log{\frac{1}{\mu}},$ since then $\p{G} \geq 0$. 
On the other hand, if $b \geq \frac{\mu}{M}> a$ then $G(a) \geq a\log{\frac{1}{\mu}}$ and $G(b) \leq b\log{\frac{1}{\mu}}$. \\
But then, either $G(b) > G(a),$ whence $|G(b) - G(a)| \leq |b-a|\log{\frac{1}{\mu}}$ or $G(b) < G(a),$ whence $|G(b)-G(a)| \leq |b-a|\log{(eD)},$ since we must have $G(a) - G(b) = (a-b)G^{\prime}(\xi),$ for some $\xi \in (\frac{1}{eM},\frac{D}{M}]$. \\
Thus, in the first case, $|G(b) - G(a)| \leq \frac{2\mu}{M}\log{\frac{1}{\mu}} + |b-a|\bigl[\log{\frac{1}{\mu}} + \log(eD)\bigr]$.

The second case, is when both $a,b \geq  \frac{\mu}{M}$. Then $G(b) - G(a) = (b-a)\p{G}(\xi)$, for some $\frac{D}{M} \geq \xi \geq \mu\frac{1}{M}$.\\
Since then $|\p{G}(\xi)| \leq \log{\frac{1}{\mu}} + \log{D}+1$,
we have $|G(b) - G(a)| \leq |b-a|(\log{\frac{1}{\mu}} + \log{(eD)})$.

In any case, $|G(b) - G(a)| \leq  \frac{2\mu}{M}\log{\frac{1}{\mu}} + |b-a|\bigl[\log{\frac{1}{\mu}} + \log{(eD)}\bigr].$
\end{proof}
\end{appendices}
%\nocite{*}
%
%\addcontentsline{toc}{section}{}
%
%\bibliography{logconcave.bib}
%
%\bibliographystyle{IEEETranS}

\section*{Acknowledgements}

The author is indebted to Ioannis Kontoyiannis for interesting discussions as well as many useful suggestions and comments. The author would also like 
to thank the two anonymous reviewers for the careful reading of the manuscript and for many useful comments that significantly improved the presentation of the results in the paper.

\end{document}